\newtheorem{thm}{Theorem}[section]
\newtheorem{lem}[thm]{Lemma}
\newtheorem{definition}{Definition}[section]
\newtheorem{example}[definition]{Example}
\begin{document}

\title[Lipschitzian solutions to iterative equations]{Lipschitzian solutions to inhomogeneous linear iterative equations}

\author[K. Baron]{Karol Baron}
\address{Instytut Matematyki\\
Uniwersytet \'{S}l\c aski\\
Bankowa 14, PL-40-007 Katowice\\
Poland}
\email{baron@us.edu.pl}

\author[J. Morawiec]{Janusz Morawiec}
\address{Instytut Matematyki\\
Uniwersytet \'{S}l\c aski\\
Bankowa 14, PL-40-007 Katowice\\
Poland}
\email{morawiec@math.us.edu.pl}

\subjclass[2010]{39B12}
\keywords{iterative equations, Lipschitzian solutions, continuous dependence of solutions, Bochner integral}

\begin{abstract}
We study the problems of the existence, uniqueness and continuous dependence of Lipschitzian solutions $\varphi$ of equations of the form
$$
\varphi(x)=\int_{\Omega}g(\omega)\varphi\big(f(x,\omega)\big)\mu(d\omega)+F(x),
$$
where $\mu$ is a measure on a  $\sigma$-algebra of subsets of $\Omega$.
\end{abstract}

\maketitle


\section{Introduction}

Fix a measure space $(\Omega,{\mathcal A},\mu)$ and a separable metric space $(X,\rho)$.

Motivated by appearance of the equation
$$
\varphi(x)=\int_{A_1}\varphi\big(f(x,\omega)\big)\mu(d\omega)+c-\int_{A_2}\varphi\big(f(x,\omega)\big)\mu(d\omega)
$$
with disjoint $A_1, A_2 \in \mathcal A$ in the theory of perpetuities and of refinement equations, see section 3.4 of the survey paper \cite{KM2013}, we consider problems of the existence, uniqueness and continuous dependence of Lipschitzian solutions $\varphi$ to the equation
\begin{equation}\label{1}
\varphi(x)=\int_{\Omega}g(\omega)\varphi\big(f(x,\omega)\big)\mu(d\omega)+F(x).
\end{equation}
Concerning the given functions $f, g$ and $F$ we assume the following hypotheses in which ${\mathcal B}$ stands for the $\sigma$-algebra of all Borel subsets of $X$ and $\mathbb K \in \{\mathbb R, \mathbb C\}$.

(H$_1$) Function $f$ maps $X\times\Omega$ into $X$ and for every $x\in X$ the function $f(x,\cdot)$ is $\mathcal A$-measurable, i.e., 
$$
\big\{\omega\in\Omega:f(x,\omega)\in B\big\}\in{\mathcal A}\hspace{3ex}\hbox{ for all }x\in X\hbox{ and }B\in{\mathcal B}.
$$

(H$_2$) Function $g\colon\Omega\to \mathbb K$ is integrable,
$$
\int_{\Omega}|g(\omega)|\rho\big(f(x,\omega),x\big)\mu(d\omega)<\infty
\hspace{3ex}\hbox{ for every }x\in X,
$$
and
\begin{equation}\label{b1}
\int_{\Omega}|g(\omega)|\rho\big(f(x,\omega),f(z,\omega)\big)\mu(d\omega)\leq\lambda\rho(x,z)
\hspace{3ex}\hbox{ for all }x,z\in X
\end{equation}
with a $\lambda\in[0,1)$.

(H$_3$) Function $F$ maps $X$ into a separable Banach space $Y$ over $\mathbb K$ and
\begin{equation}\label{2}
\|F(x)-F(z)\|\leq L\rho(x,z)\hspace{3ex}\hbox{ for all }x,z\in X
\end{equation}
with an $L\in[0,+\infty)$.

As emphasized in \cite[section 0.3]{KCG1990} iteration is the fundamental technique for solving functional equations in a single variable, and iterates usually appear in the formulae for solutions. However, as it seams, Lipschitzian solutions are examined rather by the fixed-point method (cf. \cite[section 7.2D]{KCG1990}). We iterate the operator which transforms a Lipschitzian  $F\colon X\to Y$ into 
$\int_{\Omega}g(\omega)F\big(f(x,\omega)\big)\mu(d\omega)$; cf. formulas (\ref{3}) and (\ref{6}) below.
The spacial case where $g(\omega)=-1$ for every $\omega \in \Omega$ and $\mu(\Omega)=1$ was examined in \cite{BKM2015} on a base of iteration of random-valued functions.

Integrating vector functions we use the Bochner integral.


\section{Existence and uniqueness}
Putting
\begin{equation}\label{5}
\gamma=\int_{\Omega}g(\omega)\mu(d\omega),
\end{equation}
we start with two simple lemmas.

\begin{lem}\label{lem21}
Assume {\rm (H$_1$)} and let $g\colon\Omega\to\mathbb K$  be integrable with $\gamma\neq 1$. If $(\ref{b1})$ holds with a $\lambda\in[0,1)$, then for any $F$ mapping $X$ into a normed space $Y$ over $\mathbb K$ equation $(\ref{1})$ has at most one Lipschitzian solution $\varphi\colon X\to Y$.
\end{lem}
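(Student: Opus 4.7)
The plan is to reduce the statement to a standard self-improvement argument for the associated homogeneous equation. Suppose $\varphi_1,\varphi_2\colon X\to Y$ are two Lipschitzian solutions of (\ref{1}), and set $\psi=\varphi_1-\varphi_2$, which is Lipschitzian with some constant $M\ge 0$. Subtracting the two copies of (\ref{1}) cancels $F$ and yields the homogeneous identity
$$
\psi(x)=\int_{\Omega}g(\omega)\psi\big(f(x,\omega)\big)\mu(d\omega)\qquad\text{for every }x\in X.
$$
First I would verify that this integral is well defined: since $(X,\rho)$ is separable and $\psi$ is continuous, $\psi(X)$ is separable in $Y$, and by (H$_1$) combined with continuity of $\psi$ the map $\omega\mapsto\psi(f(x,\omega))$ is strongly measurable; the bound $\|\psi(f(x,\omega))\|\le\|\psi(x)\|+M\rho(f(x,\omega),x)$ together with (H$_2$) then yields Bochner integrability of $\omega\mapsto g(\omega)\psi(f(x,\omega))$.

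The key step is a one-line self-improvement of the Lipschitz constant. Using linearity, the triangle inequality for the Bochner integral, the Lipschitz bound on $\psi$, and (\ref{b1}), for every $x,z\in X$ I would estimate
$$
\|\psi(x)-\psi(z)\|\le\int_{\Omega}|g(\omega)|\,\|\psi(f(x,\omega))-\psi(f(z,\omega))\|\,\mu(d\omega)\le M\lambda\rho(x,z).
$$
Hence $\psi$ is Lipschitzian with constant $M\lambda$; iterating the same argument shows that $\psi$ is Lipschitzian with constant $M\lambda^n$ for every $n\in\mathbb N$, so the assumption $\lambda<1$ forces $\psi$ to be constant, say $\psi\equiv c$.

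Finally, feeding $\psi\equiv c$ back into the homogeneous equation and using (\ref{5}) gives $c=\gamma c$, i.e., $(1-\gamma)c=0$. Since $\gamma\ne 1$, this yields $c=0$, and therefore $\varphi_1=\varphi_2$. There is no real analytic obstacle here: the only point that requires a little care is checking, at each application, that the Bochner integral of $g(\omega)\psi(f(x,\omega))$ is well defined, which follows from the separability and Lipschitz structure together with (H$_1$)--(H$_2$); the rest is the standard contraction bookkeeping combined with the scalar solvability condition $\gamma\neq 1$.
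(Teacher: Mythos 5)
Your proposal is correct and follows essentially the same route as the paper: subtract the two solutions to get a Lipschitzian solution of the homogeneous equation, use (\ref{b1}) to contract the Lipschitz constant (the paper applies the inequality once to the \emph{smallest} Lipschitz constant $L_\varphi$, obtaining $L_\varphi\leq\lambda L_\varphi$, rather than iterating to $M\lambda^n$, but this is the same idea), conclude the difference is constant, and then use $\gamma\neq 1$ to force that constant to be zero. Your extra check of Bochner integrability is harmless but not needed, since the integral's existence is already part of what it means for $\varphi_1,\varphi_2$ to be solutions of (\ref{1}).
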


\begin{proof}
Fix a function $F$ mapping $X$ into a normed space $Y$ over $\mathbb K$, let $\varphi_1,\varphi_2\colon X\to Y$ be Lipschitzian solutions of (\ref{1}), and put $\varphi=\varphi_1-\varphi_2$. Then $\varphi$ is a Lipschitzian solution of (\ref{1}) with $F=0$, and denoting by $L_\varphi$ the smallest Lipschitz constant for $\varphi$, by (\ref{b1})  for all $x,z\in X$ we have
$$
\|\varphi(x)-\varphi(z)\|\leq\int_{\Omega}|g(\omega)|\big\|\varphi\big(f(x,\omega)\big)-\varphi\big(f(z,\omega)\big)\big\|\mu(d\omega)\leq L_\varphi\lambda \rho(x,z),
$$
whence $L_\varphi=0$ and $\varphi$ is a constant function. Since $\gamma$ defined by (\ref{5}) is different from 1, the only constant solution of (\ref{1}) with $F=0$ is the zero function. 
\end{proof}

\begin{lem}\label{lem22}
Under the assumptions {\rm (H$_1$)--(H$_3$)} for every $x\in X$ the function 
$$
\omega\mapsto g(\omega)F\big(f(x,\omega)\big),\hspace{3ex}\omega\in\Omega,
$$ 
is Bochner integrable and
\begin{equation}\label{b2}
\Big\|\!\int_{\Omega}g(\omega)F\big(f(x,\omega)\big)\mu(d\omega)-\int_{\Omega}g(\omega)F\big(f(z,\omega)\big)\mu(d\omega)\Big\|\leq L\lambda\rho(x,z)
\end{equation}
for all $x,z\in X$.
\end{lem}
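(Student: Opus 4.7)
\smallskip
\noindent\textbf{Proof plan.} The statement splits cleanly into two verifications: Bochner integrability of $\omega\mapsto g(\omega)F\bigl(f(x,\omega)\bigr)$ for each fixed $x\in X$, and the Lipschitz-type estimate $(\ref{b2})$. My plan is to dispatch measurability first, then absolute integrability, and finally $(\ref{b2})$ by a direct application of the standard norm inequality for the Bochner integral.

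For Bochner integrability, I would first note that $F$ is Lipschitzian by (H$_3$), hence continuous, hence Borel measurable as a map $X\to Y$. Composing with the $\mathcal{A}$-measurable map $f(x,\cdot)\colon\Omega\to X$ supplied by (H$_1$) shows that $\omega\mapsto F\bigl(f(x,\omega)\bigr)$ is $\mathcal{A}$/Borel-measurable into $Y$; since $Y$ is separable, the Pettis measurability theorem promotes this to strong measurability. Multiplying by the $\mathcal A$-measurable scalar $g$ preserves strong measurability, so it remains to check $\int_\Omega\|g(\omega)F(f(x,\omega))\|\,\mu(d\omega)<\infty$. Adding and subtracting $F(x)$ and using (H$_3$) gives the pointwise bound
$$
|g(\omega)|\,\|F\bigl(f(x,\omega)\bigr)\|\leq L\,|g(\omega)|\,\rho\bigl(f(x,\omega),x\bigr)+\|F(x)\|\,|g(\omega)|,
$$
and both resulting integrals are finite by (H$_2$).

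For the estimate $(\ref{b2})$, I would combine the two integrals on the left-hand side by linearity of the Bochner integral and then apply the standard norm inequality $\bigl\|\!\int h\,d\mu\bigr\|\leq\int\|h\|\,d\mu$, yielding
$$
\Big\|\!\int_{\Omega}g(\omega)\bigl[F\bigl(f(x,\omega)\bigr)-F\bigl(f(z,\omega)\bigr)\bigr]\mu(d\omega)\Big\|\leq\int_{\Omega}|g(\omega)|\,\bigl\|F\bigl(f(x,\omega)\bigr)-F\bigl(f(z,\omega)\bigr)\bigr\|\,\mu(d\omega).
$$
Applying the Lipschitz bound $(\ref{2})$ inside the integral and then $(\ref{b1})$ finishes the proof, with the chain $L\!\int_{\Omega}|g(\omega)|\rho(f(x,\omega),f(z,\omega))\mu(d\omega)\leq L\lambda\rho(x,z)$.

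There is no real obstacle here; the only point that needs care is the passage from Borel to strong measurability of $\omega\mapsto F\bigl(f(x,\omega)\bigr)$, which is exactly where the separability of $Y$ in (H$_3$) is used. Everything else is a bookkeeping exercise with the triangle inequality, (H$_2$), and the norm estimate for the Bochner integral.
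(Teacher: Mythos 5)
Your proposal is correct and follows essentially the same route as the paper's (much terser) proof: establish $\mathcal A$-measurability, dominate $\|g(\omega)F(f(x,\omega))\|$ by $L|g(\omega)|\rho(f(x,\omega),x)$ plus a multiple of $|g(\omega)|\|F(x)\|$ using (H$_3$) and (H$_2$), and obtain (\ref{b2}) from the norm inequality for the Bochner integral together with (\ref{2}) and (\ref{b1}). Your fleshed-out measurability step via Pettis's theorem and the separability in (H$_3$) is exactly what the paper leaves implicit, so there is nothing to add.
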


\begin{proof}
The function considered is $\mathcal A$-measurable, for every $\omega\in\Omega$ we have
$$
\big\|g(\omega)F\big(f(x,\omega)\big)\big\|\leq L|g(\omega)|\rho\big(f(x,\omega),x\big)+L|g(\omega)|\|F(x)\|,
$$
and (\ref{b2}) holds for all $x,z\in X$.
\end{proof}

Assuming (H$_1$)--(H$_3$) and applying Lemma \ref{lem22} we define
\begin{equation}\label{3}
F_0(x)=F(x), \hspace{3ex} F_n(x)=\int_{\Omega}g(\omega)F_{n-1}\big(f(x,\omega)\big)\mu(d\omega) 
\end{equation} 
for all $x \in X$ and $n \in \mathbb N$, and we see that
\begin{equation}\label{4}
\|F_n(x)-F_n(z)\|\leq L\lambda^n\rho(x,z)\hspace{3ex}\hbox{ for all }x,z\in X \hbox{ and } n \in \mathbb N.
\end{equation}

Our main result reads.

\begin{thm}\label{thm23}
Assume {\rm (H$_1$)--(H$_3$)}. If $\gamma \not= 1$ then equation $(\ref{1})$ has exactly one Lipschitzian solution $\varphi\colon X\to Y$; it is given by the formula 
\begin{equation}\label{6}
\varphi(x)= \frac{1}{1-\gamma}\left(\sum_{n=1}^{\infty}\big(F_n(x)-\gamma F_{n-1}(x)\big)+F(x)\right)\hspace{3ex}\hbox{ for every }x \in X,
\end{equation}
\begin{equation}\label{7}
\|\varphi(x)-\varphi(z)\|\leq\frac{L(1+|\gamma|)}{|1-\gamma|(1-\lambda)}\rho(x,z)\hspace{3ex}\hbox{ for all } x,z\in X,
\end{equation}
and
\begin{equation}\label{8}
\|\varphi(x)\|\leq \frac{1}{|1-\gamma|}\left(\frac{L}{1-\lambda}\int_\Omega|g(\omega)|\rho\big(f(x,\omega),x\big)\mu(d\omega)+\|F(x)\|\right)
\end{equation}
for every $x\in X$.
\end{thm}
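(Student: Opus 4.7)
The uniqueness assertion is furnished directly by Lemma~\ref{lem21}. For existence I would verify in turn that the formula (\ref{6}) defines an element of $Y$ for each $x$, that this $\varphi$ is Lipschitzian with the stated constant, that it satisfies (\ref{1}), and that the norm bound (\ref{8}) holds.

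The cornerstone of the first and fourth points is the identity
$$F_n(x)-\gamma F_{n-1}(x)=\int_\Omega g(\omega)\bigl[F_{n-1}(f(x,\omega))-F_{n-1}(x)\bigr]\mu(d\omega),$$
which follows from (\ref{3}) together with the definition (\ref{5}) of $\gamma$. Combined with (\ref{4}) it yields
$$\bigl\|F_n(x)-\gamma F_{n-1}(x)\bigr\|\leq L\lambda^{n-1}\int_\Omega|g(\omega)|\rho(f(x,\omega),x)\mu(d\omega),$$
so the integrability condition in (H$_2$) makes the series in (\ref{6}) Bochner-summable; dividing by $|1-\gamma|$ and adding $\|F(x)\|/|1-\gamma|$ gives at once the estimate (\ref{8}). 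The Lipschitz bound (\ref{7}) is obtained by applying (\ref{4}) separately to $F_n(x)-F_n(z)$ and $\gamma(F_{n-1}(x)-F_{n-1}(z))$, summing the geometric series $\sum_{n\geq 1}(\lambda^n+|\gamma|\lambda^{n-1})=(\lambda+|\gamma|)/(1-\lambda)$, and adding the Lipschitz contribution $L\rho(x,z)$ of $F$; collecting terms produces the factor $(1+|\gamma|)/(1-\lambda)$.

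The delicate step is checking that $\varphi$ solves (\ref{1}). Substituting $f(x,\omega)$ into (\ref{6}) and integrating against $g(\omega)\mu(d\omega)$ should, via the identity $\int_\Omega g(\omega) F_{k-1}(f(x,\omega))\mu(d\omega)=F_k(x)$ inherent in (\ref{3}), produce a shifted series whose index change telescopes back to $(1-\gamma)\varphi(x)-F(x)$. The only non-formal point is the interchange of the Bochner integral with the infinite sum, which I would justify by the Bochner dominated convergence theorem. A workable dominant comes from the elementary estimate
$$\bigl\|F_n(y)-\gamma F_{n-1}(y)\bigr\|\leq L\lambda^{n-1}M(x)+L(\lambda^n+|\gamma|\lambda^{n-1})\rho(y,x),$$
where $M(x)=\int_\Omega|g(\omega)|\rho(f(x,\omega),x)\mu(d\omega)$, obtained by inserting $\pm F_n(x)\mp\gamma F_{n-1}(x)$ and using (\ref{4}). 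Summed over $n$ and evaluated at $y=f(x,\omega)$, this produces two $|g|$-weighted terms, the first integrable by the integrability of $g$ and the second by (H$_2$). I expect this dominance argument to be the main obstacle; once it is in place, the telescoping computation completes the proof.
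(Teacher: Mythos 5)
Your proposal is correct and follows essentially the same route as the paper's proof: your key identity and the resulting bound $\|F_n(x)-\gamma F_{n-1}(x)\|\leq L\lambda^{n-1}\int_\Omega|g(\omega)|\rho(f(x,\omega),x)\mu(d\omega)$ are exactly the paper's inequality (\ref{9}), the Lipschitz and norm estimates are the paper's ``routine calculations'' made explicit, and the interchange of sum and Bochner integral via an (H$_2$)-integrable majorant is the paper's central step as well (the paper builds its dominant by first proving a linear-growth bound $M(y)\leq c_1\rho(y,x_0)+c_2$ at a base point and then applying the triangle inequality, whereas you expand around $x$ directly by inserting $\pm F_n(x)\mp\gamma F_{n-1}(x)$ --- both yield an integrable dominant). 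One small arithmetic slip in your sketch: after reindexing, $(1-\gamma)\int_\Omega g(\omega)\varphi(f(x,\omega))\mu(d\omega)=\sum_{n\geq 1}(F_{n+1}(x)-\gamma F_n(x))+F_1(x)=\sum_{n\geq 1}(F_n(x)-\gamma F_{n-1}(x))+\gamma F(x)=(1-\gamma)\big(\varphi(x)-F(x)\big)$, not $(1-\gamma)\varphi(x)-F(x)$; the extra $\gamma F(x)$ produced by the shift is precisely what makes the inhomogeneous term come out right.
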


\begin{proof}
For the proof of the existence observe first that by (\ref{5}), (\ref{3}) and (\ref{4}) for all $x \in X$ and $n \in \mathbb N$ we have
\begin{eqnarray}\label{9}
\nonumber
\|F_n(x)-\gamma F_{n-1}(x)\|\!
&=&\!\Big\|\!\int_{\Omega}\!g(\omega)F_{n-1}\big(f(x,\omega)\big)\mu(d\omega)-\! \int_{\Omega}\!g(\omega)F_{n-1}(x)\mu(d\omega)\Big\|\\
&\leq& L\lambda^{n-1}\int_{\Omega}|g(\omega)|\rho\big(f(x,\omega),x\big)\mu(d\omega).
\end{eqnarray}
Consequently (\ref{6}) defines a function $\varphi\colon X\to Y$. Routine calculations, (\ref{6}), (\ref{4}), (\ref{b1}) and (\ref{9}) show that this function satisfies (\ref{7}) and (\ref{8}).

It remains to prove that $\varphi$ solves (\ref{1}). To this end define $M\colon X\to[0,\infty)$ by
\begin{equation}\label{10}
M(x)=L\int_\Omega|g(\omega)|\rho\big(f(x,\omega),x\big)\mu(d\omega)
\end{equation}
and fix $x_0 \in X$. An obvious application of (\ref{10}), (H$_2$), (\ref{8}) and (\ref{2}) gives 
\begin{equation}\label{11}
M(x)\leq c_1\rho (x,x_0)+c_2, \hspace{3ex}\|\varphi(x)\|\leq c_1\rho(x,x_0)+c_2
\hspace{3ex}\hbox{for every }x\in X 
\end{equation}
with some constants $c_1,c_2\in[0,\infty)$.

Fix $x \in X$. According to Lemma \ref{lem22} the function
$$
\omega \longmapsto g(\omega)\varphi\big(f(x,\omega)\big),\hspace{3ex}\omega \in \Omega,
$$
is Bochner integrable. Moreover, by (\ref{9})--(\ref{11}),
\begin{eqnarray*} 
\hspace*{15ex}&&\hspace*{-25ex}\Big\|g(\omega)\Big(F_n\big(f(x,\omega)\big)-\gamma F_{n-1}\big(f(x,\omega)\big)\Big)\Big\|\leq\lambda^{n-1}|g(\omega)|M\big(f(x,\omega)\big)\\
&\leq& \lambda^{n-1}|g(\omega)|\big(c_1\rho(f(x,\omega),x_0)+c_2\big)\\
&\leq& \lambda^{n-1}|g(\omega)|\big(c_1\rho(f(x,\omega),x)+c_1\rho(x,x_0)+c_2\big)
\end{eqnarray*}
for all $n \in \mathbb N$ and $\omega \in \Omega$. Hence, making use of (H$_2$), the dominated convergence theorem and (\ref{3}) we see that
\begin{eqnarray}\label{13} 
\nonumber
\hspace*{10ex}&&\hspace*{-20ex}
\int_\Omega\sum_{n=1}^\infty g(\omega)\Big(F_n\big(f(x,\omega)\big)-\gamma F_{n-1}\big(f(x,\omega)\big)\Big)\mu(d\omega)\\
&=&\sum_{n=1}^\infty\int_\Omega g(\omega)\Big(F_n\big(f(x,\omega)\big)-\gamma F_{n-1}\big(f(x,\omega)\big)\Big)\mu(d\omega)\\
\nonumber
&=&\sum_{n=1}^\infty\big(F_{n+1}(x)-\gamma F_n(x)\big).
\end{eqnarray}
Applying now (\ref{6}), (\ref{13}) and (\ref{3}) we get 
\begin{eqnarray*} 
\hspace*{10ex}&&\hspace*{-20ex}\int_{\Omega}g(\omega)\varphi\big(f(x,\omega)\big)\mu(d\omega)\\
&=&
\frac{1}{1-\gamma}\int_{\Omega}\Bigg[\sum_{n=1}^{\infty}g(\omega)\Big(F_n\big(f(x,\omega)\big)
-\gamma F_{n-1}\big(f(x,\omega)\big)\Big)\\
&&+g(\omega)F\big(f(x,\omega)\big)\Bigg]\mu(d\omega) \\
&=& \frac{1}{1-\gamma}\left[\sum_{n=1}^\infty\Big(F_{n+1}(x)-\gamma F_n(x)\Big)+F_1(x)\right]\\
&=& \frac{1}{1-\gamma}\left[\sum_{n=1}^\infty\Big(F_n(x)-\gamma F_{n-1}(x)\Big)+\gamma F(x)\right]\\
&=& \varphi(x)-F(x).
\end{eqnarray*}
The proof is complete.
\end{proof}


\section{Examples}

\begin{example}\label{ex31}
{\rm Given $\lambda\in(0,1)$ and an integrable $\xi\colon\Omega\to\mathbb R$ consider the equation
$$
\varphi(x)=\lambda^2\int_{\Omega}\varphi\left(\frac{1}{\lambda}x+\xi(\omega)\right)\mu(d\omega)
$$
with $\mu(\Omega)=1$. According to Lemma \ref{lem21} the zero function is the only its Lipschitzian solution $\varphi\colon\mathbb R\to\mathbb R$. Note however that if
$$
\int_{\Omega}\xi(\omega)\mu(d\omega)=0\hspace{3ex}\hbox{ and }\hspace{3ex}
\int_{\Omega}\xi(\omega)^2\mu(d\omega)<\infty,
$$
then this equation solves also the function $\varphi\colon\mathbb R\to\mathbb R$ given by
$$
\varphi(x)=x^2+\frac{\lambda^2}{1-\lambda^2}\int_{\Omega}\xi(\omega)^2\mu(d\omega).
$$}
\end{example}

\begin{example}\label{ex32}
{\rm Given $\lambda\in(0,1)$ consider the equation
$$
\varphi(x)=2\varphi\big(\lambda\sqrt{x}+1-\lambda\big)+\log\frac{x}{(\lambda\sqrt{x}+1-\lambda)^2}.
$$
According to Lemma \ref{lem21} (in this case $f(x,\omega)=\lambda\sqrt{x}+1-\lambda$, $g(\omega)=2$ and $F(x)=\log\frac{x}{(\lambda\sqrt{x}+1-\lambda)^2}$ for all $x\in[1,\infty)$ and $\omega\in\Omega$, $\mu(\Omega)=1$) the logarithmic function restricted to $[1,\infty)$ is the only Lipschitzian solution $\varphi\colon[1,\infty)\to\mathbb R$ to this equation, and it is unbounded in spite of the fact that $F$ is bounded.}
\end{example}

\begin{example}\label{ex33}
{\rm To see that assumptions (H$_1$)--(H$_3$) do not guarantee the existence of a {\it continuous} solution   $\varphi\colon X\to Y$ to equation (\ref{1}), given $\alpha\in(-1,1)$, a bounded and $\mathcal A$-measurable $\xi\colon\Omega\to\mathbb R$, and a Lipschitzian $F\colon\mathbb R\to[0,\infty)$ such that $F^{-1}(\{0\})$ is a singleton, consider the equation 
\begin{equation}\label{b3}
\varphi(x)=\int_{\Omega}\varphi\big(\alpha x+\xi(\omega)\big)\mu(d\omega)+F(x)
\end{equation}
with $\mu(\Omega)=1$. Assume a continuous $\varphi\colon\mathbb R\to\mathbb R$ solves it. We shall see that then $\xi$ is a.e. constant. To this end fix an $M\in(0,\infty)$ such that $|\xi(\omega)|\leq M$ for every $\omega\in\Omega$, and a real number $a\geq\frac{M}{1-|\alpha|}$ such that $F^{-1}(\{0\})\subset[-a,a]$. Then
$$
|\alpha x+\xi(\omega)|\leq a\hspace{3ex}\hbox{ for all }x\in[-a,a]\hbox{ and }\omega\in\Omega
$$
and so $\varphi |_{[-a,a]}$ is a continuous, hence also bounded, solution of (\ref{b3}). According to \cite[Corollary 4.1(ii) and Example 4.1]{B2009} it is possible only if $\xi$ is a.e. constant.}
\end{example}


\section{Continuous dependence}

Given a normed space $(Y,\|\cdot\|)$ consider now the linear space $Lip(X,Y)$ of all Lipschitzian functions mapping $X$ into $Y$, and its linear subspace $BL(X,Y)$ of all Lipschitzian and bounded functions mapping $X$ into $Y$.  Fix $x_0 \in X$ and define $\|\cdot\|_{Lip}\colon Lip(X,Y)\to [0,\infty)$ by
$$
\|u\|_{Lip}=\|u(x_0)\|+\|u\|_L,
$$
where $\|u\|_L$  stands for the smallest Lipschitz constant for $u$. Clearly $\|\cdot\|_{Lip}$ is a norm in $Lip(X,Y)$. It depends on the fixed point $x_0$, but for different points such norms are equivalent. It is well known that if $(Y,\|\cdot\|)$ is Banach, then so is $(Lip(X,Y),\|\cdot\|_{Lip})$. In the linear space $BL(X,Y)$ we consider the norm $\|\cdot\|_{BL}$ given by
$$
\|u\|_{BL}=\sup\big\{\|u(x)\|:x\in X\big\}+\|u\|_L.
$$
It is also well known that if $(Y,\|\cdot\|)$ is Banach, then so is $(BL(X,Y),\|\cdot\|_{BL})$.

Assume (H$_1$) and (H$_2$), $\gamma \not= 1$, and let $Y$ be a separable Banach space over $\mathbb K$. 

According to Theorem $\ref{thm23}$ for every $F\in Lip(X,Y)$ the formula
\begin{equation}\label{14}
\varphi^F(x)= \frac{1}{1-\gamma}\left(\sum_{n=1}^{\infty}\Big(F_n(x)-\gamma F_{n-1}(x)\Big)+F(x)\right)
\end{equation}
for every $x \in X$, defines the only Lipschitzian solution $\varphi^F$ of equation (\ref{1}),
\begin{equation}\label{15}
\|\varphi^F\|_L\leq\frac{1+|\gamma|}{|1-\gamma|(1-\lambda)}\|F\|_L
\end{equation}
and
\begin{equation}\label{16}
\|\varphi^F(x)\|\leq \frac{1}{|1-\gamma|}\left(\frac{\|F\|_L}{1-\lambda}\int_\Omega|g(\omega)|\rho\big(f(x,\omega),x\big)\mu(d\omega)+\|F(x)\|\right)
\end{equation}
for every $x \in X$. Putting
\begin{equation}\label{17}
c_0=\frac{1}{1-\lambda}\left(\int_\Omega|g(\omega)|\rho\big(f(x_0,\omega),x_0\big)\mu(d\omega)+1+|\gamma|\right),\hspace{3ex} c=\max\{1,c_0\},
\end{equation}
and applying (\ref{15}) and (\ref{16}) we see that if $F \in Lip(X,Y)$, then
\begin{eqnarray*}
\|\varphi^F\|_{Lip}&=&\|\varphi^F(x_0)\|+\|\varphi^F\|_L\leq\frac{1}{|1-\gamma|}\big(c_0\|F\|_L+\|F(x_0)\|\big)\\
&\leq&\frac{c}{|1-\gamma|}\|F\|_{Lip}.
\end{eqnarray*}
Moreover, if $d_0$ defined by
\begin{equation}\label{b4}
d_0=\sup\left\{\int_{\Omega}|g(\omega)|\rho\big(f(x,\omega),x\big)\mu(d\omega):x\in X\right\}
\end{equation}
is finite, then putting
\begin{equation}\label{b5}
d=\max\left\{1,\frac{d_0+1+|\gamma|}{1-\lambda}\right\}
\end{equation}
and applying (\ref{16}) and (\ref{15})  again we see also that if $F\in BL(X,Y)$, then $\varphi^F\in BL(X,Y)$ as well and
$$
\|\varphi^F\|_{BL}\leq\frac{1}{|1-\gamma|}\!\left(\frac{d_0+1+|\gamma|}{1-\lambda}\|F\|_{L}+\sup\big\{\|F(x)\|\!:\!x\in X\big\}\!\right)\leq\frac{d}{|1-\gamma|}\|F\|_{BL}.
$$

\begin{thm}\label{thm41}
Assume {\rm (H$_1$), (H$_2$)} and let $\gamma$ defined by $(\ref{5})$ be different from {\rm 1}. If $Y$ is a separable Banach space over $\mathbb K$ then:

{\rm (i)} for any $F\in Lip(X,Y)$ the function $\varphi^F\colon X \to Y$ defined by $(\ref{14})$ and $(\ref{3})$ is the only Lipschitzian solution of $(\ref{1})$, the operator
\begin{equation}\label{18}
F\mapsto \varphi^F,\hspace{3ex}F\in Lip(X,Y),
\end{equation}
is a linear homeomorphism of  $(Lip(X,Y),\|\cdot\|_{Lip})$ onto itself  and
$$
\|\varphi^F\|_{Lip}\leq\frac{c}{|1-\gamma|}\|F\|_{Lip}\hspace{3ex}\hbox{ for every } F\in Lip(X,Y)
$$
with $c$ given by $(\ref{17})$;

{\rm (ii)} if additionally $d_0$ defined by $(\ref{b4})$ is finite, then the restriction of the operator $(\ref{18})$ to $BL(X,Y)$ is a linear homeomorphism of $(BL(X,Y),\|\cdot\|_{BL})$ onto itself and
$$
\|\varphi^F\|_{BL}\leq\frac{d}{|1-\gamma|}\|F\|_{BL}\hspace{3ex}\hbox{ for every }F\in BL(X,Y)
$$
with $d$ given by $(\ref{b5})$.
\end{thm}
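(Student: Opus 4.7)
The quantitative heart of the theorem has already been extracted in the paragraphs preceding the statement: Theorem \ref{thm23} supplies, for each $F\in Lip(X,Y)$, the unique Lipschitzian solution $\varphi^F$ of (\ref{1}), and the estimates (\ref{15}) and (\ref{16}) evaluated at $x=x_0$, combined with the definitions (\ref{17}), (\ref{b4}), (\ref{b5}), yield the norm inequalities $\|\varphi^F\|_{Lip}\leq\frac{c}{|1-\gamma|}\|F\|_{Lip}$ and, under the hypothesis of (ii), $\|\varphi^F\|_{BL}\leq\frac{d}{|1-\gamma|}\|F\|_{BL}$. Hence the operator $T\colon F\mapsto\varphi^F$ is well-defined on $Lip(X,Y)$, sends $BL(X,Y)$ into $BL(X,Y)$, and is bounded in the respective norms. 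What remains is to verify that $T$ is linear, surjective, and bicontinuous.

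Linearity is immediate from the recursion (\ref{3}), which makes $F\mapsto F_n$ linear for every $n\in\mathbb N$; hence formula (\ref{14}) is linear in $F$, and so is $T$.

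For surjectivity, given any $\psi\in Lip(X,Y)$ I would define
\[
\widetilde F(x)=\psi(x)-\int_{\Omega}g(\omega)\psi\big(f(x,\omega)\big)\mu(d\omega),\qquad x\in X.
\]
Lemma \ref{lem22} (applied to $\psi$ in place of $F$) guarantees that the integral is well defined and Lipschitzian in $x$ with constant at most $\lambda\|\psi\|_L$, so $\widetilde F\in Lip(X,Y)$ with $\|\widetilde F\|_L\leq(1+\lambda)\|\psi\|_L$. By construction $\psi$ satisfies (\ref{1}) with $F=\widetilde F$, and the uniqueness supplied by Lemma \ref{lem21} forces $\psi=\varphi^{\widetilde F}$, proving $T$ is onto $Lip(X,Y)$. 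If in addition $\psi\in BL(X,Y)$ then the integral above is dominated in norm by $\sup_{x\in X}\|\psi(x)\|\int_{\Omega}|g(\omega)|\mu(d\omega)$, whence $\widetilde F\in BL(X,Y)$ as well, so the restriction $T|_{BL(X,Y)}$ is also surjective.

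Finally, bicontinuity is handed to us for free by the bounded inverse theorem: $(Lip(X,Y),\|\cdot\|_{Lip})$ and $(BL(X,Y),\|\cdot\|_{BL})$ are Banach spaces when $Y$ is (as recalled in the paper), and $T$ is a bounded linear bijection on each, so $T^{-1}$ is automatically bounded. The only mildly delicate step is the production of the preimage $\widetilde F$, but Lemma \ref{lem22} is tailor-made for it, so I anticipate no real obstacle.
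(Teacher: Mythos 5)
Your proposal is correct and follows essentially the same route as the paper: the a priori bounds from the preceding paragraphs, surjectivity via $\widetilde F(x)=\psi(x)-\int_\Omega g(\omega)\psi\big(f(x,\omega)\big)\mu(d\omega)$ together with Lemmas \ref{lem22} and \ref{lem21}, and the Banach inverse mapping theorem. The only point you gloss over is injectivity of the operator, which is needed before the bounded inverse theorem can be invoked; it is immediate because $\varphi^F$ solves (\ref{1}), so $\varphi^F=0$ forces $F=0$ (equivalently, your map $\psi\mapsto\widetilde F$ is in fact a two-sided inverse).
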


\begin{proof}
By the above considerations and the Banach inverse mapping theorem it remains to show that operator (\ref{18}) is  one-to-one, maps $Lip(X,Y)$ onto $Lip(X,Y)$ and $BL(X,Y)$ onto $BL(X,Y)$.

The first property follows from the fact that for any $F\in Lip(X,Y)$ the function $\varphi^F$ is a solution of (\ref{1}): if $\varphi^F=0$, then $F=0$. To get the next two observe that if $\psi\in Lip(X,Y)$, then by Lemma \ref{lem22} the function $F\colon X\to Y$ given by
$$
F(x)=\psi(x)-\int_\Omega g(\omega)\psi (f(x,\omega))\mu(d\omega)
$$
belongs to $Lip(X,Y)$, if $\psi$ is also bounded, then so is $F$, and, since both $\psi$ and $\varphi^F$ solve (\ref{1}), $\psi=\varphi^F$ by Lemma \ref{lem21}.
\end{proof}

\section*{Acknowledgement}
This research was supported by the University of Silesia Mathematics Department (Iterative Functional Equations and Real Analysis program).

\end{document}